\documentclass[10pt,letterpaper]{article}
\makeatletter
 \oddsidemargin .5cm \evensidemargin .5cm
\marginparwidth 40pt \marginparsep 10pt \topmargin 0.5cm
 \headsep1pt
 \headheight 0pt
 \textheight 8.5in
 \textwidth 5.8in
 \sloppy
\brokenpenalty=10000

\usepackage{amscd}
\usepackage{amsmath}
\usepackage{latexsym}
\usepackage{amsfonts}
\usepackage{amssymb}
\usepackage{amsthm}
\usepackage{graphicx}
\usepackage[dvipdfm,
            pdfstartview=FitH,
            CJKbookmarks=true,
            bookmarksnumbered=true,
            bookmarksopen=true,
            colorlinks=true, 
            pdfborder=001,   
            citecolor=magenta,
            linkcolor=blue,
            ]{hyperref}


\newtheorem{thm}{\textbf Theorem}[section]
\newtheorem{lem}{\textbf Lemma}[section]
\newtheorem{rem}{\textbf Remark}[section]


\newcommand{\md}{\mbox{d}}
\newcommand{\be}{\begin{eqnarray}}
\newcommand{\ee}{\end{eqnarray}}
\newcommand{\mr}{\mathbb{R}}

\newcommand{\bes}{\begin{eqnarray*}}
\newcommand{\ees}{\end{eqnarray*}}

\newcommand{\la}{\lambda}


  \begin{document}
\begin{titlepage}
\title{\bf The global well-posedness of strong solutions to 2D MHD equations in Lei-Lin space}
\author{ Baoquan Yuan\thanks{Corresponding Author: B. Yuan}\ and Yamin Xiao
       \\ School of Mathematics and Information Science,
       \\ Henan Polytechnic University,  Henan,  454000,  China.\\
        (bqyuan@hpu.edu.cn, ymxiao106@163.com)
          }

\date{}
\end{titlepage}
\maketitle
\begin{abstract}
In this paper, we study the Cauchy problem of the 2D incompressible magnetohydrodynamic equations in Lei-Lin space. The global well-posedness of a strong solution in the Lei-Lin space $\chi^{-1}(\mr^2)$ with any initial data in $\chi^{-1}(\mr^2)\cap L^2(\mr^2)$ is established. Furthermore, the uniqueness of the strong solution in $\chi^{-1}(\mr^2)$ and the Leray-Hopf weak solution in $L^2(\mr^2)$ is proved.
\end{abstract}

\vspace{.2in} {\bf Key words:} 2D MHD equations, strong solutions, Lei-Lin space, weak-strong uniqueness.

\vspace{.2in} {\bf MSC(2010):} 76W05, 35D35, 74H25.



\section{Introduction}
\setcounter{equation}{0}
\vskip .1in
In this paper, we consider the following incompressible magnetohydrodynamic equations
\be\label{1.1}
\begin{cases}
{ \begin{array}{ll}
\partial_{t}u-\mu\Delta u+(u\cdot\nabla) u+\nabla p=(b\cdot\nabla) b, \\
\partial_{t}b-\nu\Delta u+(u\cdot\nabla) b=(b\cdot\nabla) u, \\
\nabla\cdot u=\nabla\cdot b=0, \\
u(0,x)=u_{0}(x), \ b(0,x)=b_{0}(x).
 \end{array} }
\end{cases}
\ee
for $t\geq0$, $x\in\mr^2$. We denote $u=u(t,x)$, $b=b(t,x)$ and $p=p(t,x)$ the velocity field,
magnetic field and scalar pressure of the fluid, respectively. The positive constants $\mu$ and $\nu$ are the viscosity and the resistivity coefficients, and $u_0(x)$ and $b_0(x)$ are the initial velocity and initial magnetic field satisfying $\mbox{div}u_0=\mbox{div}b_0=0$, respectively.

When $b=0$, it reduces to the classical Navier-Stokes equation, which has been investigated extensively with many interesting results. Leray \cite{L} and Hopf \cite{H} established the global existence of weak solutions. Kato \cite{K}, Fujita and Kato \cite{F-K} obtained the local well-posedness for any initial data and the global well-posedness for small initial data in $L^n(\mr^n)$ and $\dot{H}^{s}(\mr^n)$ with $s\geq\frac{n}{2}-1$, respectively. Recently, Lei and Lin \cite{L-L} constructed the global mild solution with small initial data in the critical space $\chi^{-1}(\mr^3)$, and Zhang and Yin \cite{Z-Y} studied the local well-posedness with large initial data and the  global well-posedness with small initial data by the semi-group method. \\
{\bf Theorem A} (\cite{L-L} and \cite{Z-Y}) Let $u_0$ be in $\chi^{-1}(\mr^3)$. There exists a positive time $T$ such that the Navier-Stokes equation has a unique solution $u$ in $L^2([0,T];\chi^0(\mr^3))$ which also belongs to
$$\mathcal{C}([0,T];\chi^{-1}(\mr^3))\cap L^1([0,T];\chi^1(\mr^3))\cap L^\infty([0,T];\chi^{-1}(\mr^3)).$$
Let $T_{u_0}$ denote the maximal time of existence of such a solution. Then:\\
-There exists a constant $C$ such that, if $\|u_0\|_{\chi^{-1}}\leq C\mu$, then $$T_{u_0}=\infty.$$
-If $T_{u_0}$ is finite, then $$\int_0^{T_{u_0}}\|u\|_{\chi^{0}}^2\md t=\infty.$$

There are several important global well-posedness and decay results for the 3D Navier-Stokes equation and MHD equations (\ref{1.1}) in Lei-Lin space $\chi^{-1}(\mr^3)$ (see, e.g., \cite{B1,B2,B-B,W,W-W,X-Y-Z,Y}). In particular, Ye and Zhao \cite{Y-Z} proved the global well-posedness for the $n$ dimensional generalized MHD equations with small initial data. As we know that, in 2D case, the global Leray-Hopf weak solution to the MHD equations (\ref{1.1}) is a unique one in $L^2(\mr^2)$ space. We wonder if there is a global strong solution to the 2D MHD equations (\ref{1.1}) in the Lei-Lin space $\chi^{-1}(\mr^2)$ without the smallness condition, and if it is a unique solution with the Leray-Hopf weak solution with a same initial data in the space $L^2(\mr^2)\cap\chi^{-1}(\mr^2)$.

In this paper, we will prove the global well-posedness of a strong solution to (\ref{1.1}) in $\chi^{-1}(\mr^2)\cap L^2(\mr^2)$ for large initial data. Our main results are presented as follows.
\begin{thm}\label{thm1.1}
Let $(u_0,b_0)$ be in $\chi^{-1}(\mr^2)\cap L^2(\mr^2)$. A unique solution $(u,b)$ then exists in the space $L^2(\mr^+;\chi^0(\mr^2))$ which also belongs to
$$\mathcal{C}(\mr^+;\chi^{-1}(\mr^2))\cap L^1(\mr^+;\chi^1(\mr^2))\cap L^\infty(\mr^+;L^2(\mr^2))\cap L^2(\mr^+;\dot{H}^1(\mr^2)),$$
and satisfies the energy equality
\begin{align}\label{1.2}
\|(u,b)\|_{L^2}^2+2\mu\int_0^t\|\nabla u\|_{L^2}^2\md \tau+2\nu\int_0^t\|\nabla b\|_{L^2}^2\md \tau=\|(u_0,b_0)\|_{L^2}^2.
\end{align}
$(u,b)$ also satisfies the global a priori estimate
\begin{align}\label{1.3}
\|(u,b)\|_{\widetilde{L}^\infty(\chi^{-1})}+\frac{1}{2}\min\{\mu,\nu\}\int_0^t\|(u,b)\|_{\chi^1}\md\tau
\leq\|(u_0,b_0)\|_{\chi^{-1}}+\frac{C}{2\min\{\mu\nu\}}\|(u_0,b_0)\|_{L^2}^4.
\end{align}
\end{thm}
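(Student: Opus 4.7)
The plan is to build the global strong solution in two layers: first a local-in-time existence result in $\chi^{-1}(\mr^2)$ obtained by a Fourier-side fixed point, and then a global a priori $\chi^{-1}$ bound that harvests the $L^{2}$ energy dissipation specific to the $2$D setting. For the local step I would follow Lei--Lin and Zhang--Yin (as recalled in Theorem~A): take the Fourier transform of (\ref{1.1}), apply Leray's projector (bounded on each $\chi^{s}$) to eliminate $p$, recast the system in Duhamel form, and run Banach fixed point in $X_{T}=\mc C([0,T];\chi^{-1})\cap L^{1}([0,T];\chi^{1})$. The product estimate $\|fg\|_{\chi^{0}}\leq\|f\|_{\chi^{0}}\|g\|_{\chi^{0}}$ (Young's convolution on the Fourier side), combined with $\|\nabla\cdot(u\otimes v)\|_{\chi^{-1}}\leq\|u\otimes v\|_{\chi^{0}}$ and the heat-kernel bound $\|e^{\mu t\Delta}f\|_{\chi^{s}}\leq\|f\|_{\chi^{s}}$, produces the contraction and a blow-up criterion of the form: \emph{if} $T^{*}<\infty$, \emph{then} $\int_{0}^{T^{*}}\|(u,b)\|_{\chi^{0}}^{2}\md\tau=\infty$.

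The global a priori estimate (\ref{1.3}) is the heart of the proof. On the Fourier side, multiplying the equations for $\hat u$ and $\hat b$ by $|\xi|^{-1}\overline{\hat u}/|\hat u|$ and $|\xi|^{-1}\overline{\hat b}/|\hat b|$ respectively and integrating in $\xi$, using $|\xi\cdot\widehat{u\otimes v}|\leq |\xi|\,|\hat u|*|\hat v|$ together with $\int |\hat u|*|\hat v|\,\md\xi=\|u\|_{\chi^{0}}\|v\|_{\chi^{0}}$, yields
\bes
\frac{\md}{\md t}\|(u,b)\|_{\chi^{-1}}+\min\{\mu,\nu\}\,\|(u,b)\|_{\chi^{1}}\leq C\|(u,b)\|_{\chi^{0}}^{2}.
\ees
The key new ingredient is a $2$D Fourier--Lebesgue interpolation
\bes
\|u\|_{\chi^{0}}\leq C\,\|u\|_{\chi^{1}}^{1/3}\|u\|_{L^{2}}^{1/3}\|\nabla u\|_{L^{2}}^{1/3},
\ees
obtained by splitting $\int|\hat u|\,\md\xi$ at a frequency $R$: Cauchy--Schwarz with weight $|\xi|^{-1}$ (integrable at the origin in $\mr^{2}$, with $\int_{|\xi|<R}|\xi|^{-1}\md\xi=2\pi R$) controls the low-frequency piece by $\sqrt{2\pi R}\,\|u\|_{\dot H^{1/2}}\leq CR^{1/2}\|u\|_{L^{2}}^{1/2}\|\nabla u\|_{L^{2}}^{1/2}$, while the high-frequency piece is bounded by $R^{-1}\|u\|_{\chi^{1}}$, and $R$ is then optimised. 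Squaring and invoking Young's inequality with conjugate exponents $(3/2,3)$ produces
\bes
C\|(u,b)\|_{\chi^{0}}^{2}\leq\frac{1}{2}\min\{\mu,\nu\}\,\|(u,b)\|_{\chi^{1}}+C'(\mu,\nu)\,\|(u,b)\|_{L^{2}}^{2}\|\nabla(u,b)\|_{L^{2}}^{2}.
\ees
Absorbing the $\chi^{1}$-term into the left-hand side, integrating in time, and using (\ref{1.2}) to bound $\int_{0}^{t}\|(u,b)\|_{L^{2}}^{2}\|\nabla(u,b)\|_{L^{2}}^{2}\md\tau\leq\|(u_{0},b_{0})\|_{L^{2}}^{4}/(2\min\{\mu,\nu\})$ delivers (\ref{1.3}).

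With (\ref{1.3}), $\int_{0}^{T^{*}}\|(u,b)\|_{\chi^{1}}\md\tau<\infty$, and the interpolation above then gives $\int_{0}^{T^{*}}\|(u,b)\|_{\chi^{0}}^{2}\md\tau<\infty$, so the blow-up criterion forces $T^{*}=+\infty$. The energy equality (\ref{1.2}) is justified a posteriori from the extra regularity $(u,b)\in L^{1}_{\mr^{+}}(\chi^{1})\subset L^{1}_{\mr^{+}}(L^{\infty})$, which gives enough smoothness to test the equations against $(u,b)$; uniqueness in $\chi^{-1}$ then follows from the same contraction estimate used locally, iterated over any finite time using (\ref{1.3}). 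In my view the main technical obstacle is the $2$D interpolation itself: since $\dot H^{1}(\mr^{2})\not\hookrightarrow\chi^{0}(\mr^{2})$, the high-frequency $\chi^{1}$-piece cannot be dispensed with when bounding $\|u\|_{\chi^{0}}$, and only the fractional exponent $2/3$ on $\|u\|_{\chi^{1}}$ allows Young's inequality to leave the coercive $\frac{1}{2}\min\{\mu,\nu\}\|(u,b)\|_{\chi^{1}}$ on the left while producing on the right the $L^{1}_{t}$-integrable quantity $\|(u,b)\|_{L^{2}}^{2}\|\nabla(u,b)\|_{L^{2}}^{2}$, thereby converting the smallness-requiring $3$D argument of Lei--Lin into a large-data $2$D result.
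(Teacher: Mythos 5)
Your proposal is correct and follows essentially the same route as the paper: a Fourier-side fixed point with the $L^2_t\chi^0$ blow-up criterion, the energy equality, and an interpolation that trades $\|(u,b)\|_{\chi^0}^2$ for $\frac12\min\{\mu,\nu\}\|(u,b)\|_{\chi^1}$ plus the time-integrable quantity $\|(u,b)\|_{L^2}^2\|\nabla(u,b)\|_{L^2}^2$. Your "key new ingredient" $\|u\|_{\chi^0}\leq C\|u\|_{\chi^1}^{1/3}\|u\|_{L^2}^{1/3}\|\nabla u\|_{L^2}^{1/3}$ is exactly the composition of the paper's Lemma \ref{lem2.1} (with $s_1=-\tfrac12$, $s_0=0$, $s_2=1$) and Lemma \ref{lem2.2}, so the two arguments coincide up to whether one passes through the intermediate space $\chi^{-1/2}$.
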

The weak-strong uniqueness theorem is in order.
\begin{thm}\label{thm1.2}
Let $(u_0,b_0)$ and $(v_0,h_0)$ be the divergence-free vector field in $L^2(\mr^2)$ and $(u_0,b_0)$ also be in $\chi^{-1}(\mr^2)$. Let $(v,h)\in L^\infty(\mr^+;L^2(\mr^2))\cap L^2(\mr^+;\dot{H}^1(\mr^2))$ be a Leray-Hopf weak solution associated with $(v_0,h_0)$, and let $(u,b)$ be a strong solution constructed in Theorem \ref{thm1.1} associated with $(u_0,b_0)$ and $$(u,b)\in L^2(\mr^+;\chi^0(\mr^2))\cap L^\infty(\mr^+;L^2(\mr^2))\cap L^2(\mr^+;\dot{H}^1(\mr^2)).$$
Write $(w,g)\triangleq (u-v,b-h)$, then we have for $0\leq t<\infty$
\begin{align}\label{1.4}
\nonumber&\|(w,g)\|_{L^2}^2+\frac12\min\{\mu,\nu\}\left(\int_0^t\|\nabla w\|_{L^2}^2\md \tau+\int_0^t\|\nabla g\|_{L^2}^2\md \tau\right)\\
\leq&\|(u_0,b_0)-(v_0,h_0)\|_{L^2}\times\mbox{exp}\left(C\int_0^t\|(u,b)\|_{\chi^0}^2\md \tau\right).
\end{align}
\end{thm}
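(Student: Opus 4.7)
I would follow the classical weak--strong uniqueness scheme adapted to the MHD system: combine the energy equality (\ref{1.2}) for the strong solution $(u,b)$ with the Leray--Hopf energy inequality for the weak solution $(v,h)$, subtract twice the cross identities obtained by pairing each solution against the other in the PDE, and close a Gronwall argument on $\|(w,g)\|_{L^2}^2$ using the regularity $(u,b)\in L^2(\mr^+;\chi^0)$ provided by Theorem \ref{thm1.1}.

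Concretely, using bilinear decompositions such as $(u\cdot\nabla)u-(v\cdot\nabla)v=(w\cdot\nabla)u+(v\cdot\nabla)w$ (and the three analogous identities for the other nonlinear terms), the system for $(w,g)=(u-v,b-h)$ can be tested formally against $(w,g)$. The transport terms $\int(v\cdot\nabla)w\cdot w$ and $\int(v\cdot\nabla)g\cdot g$ and the pressure term vanish by incompressibility, and the two mixed $h$-terms $\int(h\cdot\nabla)g\cdot w+\int(h\cdot\nabla)w\cdot g$ cancel by $\nabla\cdot h=0$. What remains is
\begin{align*}
\tfrac12\tfrac{d}{dt}\|(w,g)\|_{L^2}^2&+\mu\|\nabla w\|_{L^2}^2+\nu\|\nabla g\|_{L^2}^2\\
&=-\int(w\cdot\nabla)u\cdot w-\int(w\cdot\nabla)b\cdot g+\int(g\cdot\nabla)b\cdot w+\int(g\cdot\nabla)u\cdot g.
\end{align*}
An integration by parts in each of the four terms moves the derivative off $u$ or $b$, after which the embedding $\chi^0(\mr^2)\hookrightarrow L^\infty(\mr^2)$, immediate from $\|f\|_{L^\infty}\le\|\hat f\|_{L^1}=\|f\|_{\chi^0}$, bounds the right-hand side by $C\|(u,b)\|_{\chi^0}(\|w\|_{L^2}+\|g\|_{L^2})(\|\nabla w\|_{L^2}+\|\nabla g\|_{L^2})$. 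Young's inequality then absorbs the gradient factors into the dissipation to produce
$$\tfrac{d}{dt}\|(w,g)\|_{L^2}^2+\tfrac12\min\{\mu,\nu\}\bigl(\|\nabla w\|_{L^2}^2+\|\nabla g\|_{L^2}^2\bigr)\le C\|(u,b)\|_{\chi^0}^2\,\|(w,g)\|_{L^2}^2,$$
and Gronwall's lemma combined with the integrability $(u,b)\in L^2(\mr^+;\chi^0)$ from Theorem \ref{thm1.1} delivers (\ref{1.4}) after integration in time.

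The main obstacle is the rigorous justification of the energy computation above, since $(v,h)$ is only a Leray--Hopf weak solution, $\partial_t v$ and $\partial_t h$ live only in a distributional dual space, and strictly speaking $v,h$ cannot be paired with themselves in their own weak formulation. I would handle this in the standard way: the energy equality (\ref{1.2}) for the strong solution and the Leray--Hopf energy inequality for the weak one are known a priori, and for the cross identities the strong solution $(u,b)\in L^\infty(L^2)\cap L^2(\dot H^1)\cap L^2(\chi^0)$ is regular enough---in particular it lies in $L^2(L^\infty)$ through $\chi^0\hookrightarrow L^\infty$---to serve as an admissible test function in the weak formulation of the equations for $(v,h)$ after a time-mollification, so that passing to the limit recovers precisely the identity used above.
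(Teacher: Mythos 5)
Your proposal is correct and follows essentially the same route as the paper: the same bilinear decomposition, the same three cancellations by incompressibility, integration by parts plus the embedding $\|f\|_{L^\infty}\le\|f\|_{\chi^0}$, Young's inequality to absorb the gradients, and Gronwall with $(u,b)\in L^2(\mr^+;\chi^0)$. Your additional care about justifying the energy computation for the Leray--Hopf weak solution via mollification is a point the paper passes over silently, but it does not change the argument.
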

\begin{rem}
If two solutions $(u,b)$ and $(v,h)$ constructed in Theorem \ref{thm1.1} have a same initial data $(u_0,b_0)=(v_0,h_0)\in L^2(\mr^2)\cap \chi^{-1}(\mr^2)$, then the estimate inequality (\ref{1.4}) implies the uniqueness of solutions $(u,b)\equiv (v,h)$.
\end{rem}
\begin{rem}
The definitions and notations used in this paper are presented as follows:\\
(1) For $s\in \mr$, the functional space $\chi^s(\mr^n)$ is defined by
$$\chi^{s}:=\left\{f\in \mathcal{D}'(\mr^n)\ \bigg| \ \int_{\mr^n}|\xi|^s|\hat f(\xi)|\mathrm{d} \xi<\infty\right\}.$$
(2) Let $s\in\mr$ and $p\in [1,\infty]$. $f(t,x)\in L^p([0,T];\chi^s(\mr^n))$ if and only if
$$\|f\|_{L^p(\chi^s)}\triangleq\Big( \int^T_0\|f(t,\cdot)\|^p_{\chi^s}\md t\Big)^{\frac1{p}}<\infty;$$
$f(t,x)\in \tilde L^p([0,T];\chi^s(\mr^n))$ if and only if
$$\|f\|_{\tilde L^p(\chi^s)}\triangleq\Big\|\Big(\int^T_0\left(|\xi|^s|\hat{f}(t,\xi)|\right)^p\md t\Big)^{\frac1{p}}\Big\|_{L^1}<\infty.$$
(3) we will use $\|\cdot\|_{X}$ to denote $\|\cdot\|_{X(\mr^n)}$, $\|\cdot\|_{\tilde{L}^q(\chi^s)}$ to denote $\|\cdot\|_{\tilde{L}^q([0,T];\chi^s(\mr^n))}$ and $\|(u,b)\|^p_{X}$ to denote $\|u\|^p_{X}+\|b\|^p_{X}$ for conciseness. And throughout the paper, $C$ stands for a generic positive constant, which may be different from line to line.
\end{rem}
\begin{rem}
The space $L^2(\mr^2)\cap\chi^{-1}(\mr^2)$ is not empty. Indeed, let $f=\mathcal{F}^{-1}\left(\frac{1}{|\xi|^2}\mathbf{1}_{\{|\xi|>2\}}\right)$, then we have
$$\|f\|_{\chi^{-1}}=2\pi\int_2^{+\infty}\frac{1}{r^2}\md r<\infty$$ and $$\|f\|_{L^2}=2\pi\int_2^{+\infty}\frac{1}{r^3}\md r<\infty.$$
Therefore, $f\in L^2(\mr^2)\cap\chi^{-1}(\mr^2)$.
\end{rem}


\section{Preliminaries}
\setcounter{equation}{0}
\vskip .1in
In this Preliminary section, we present some elementary lemmas which will be used in our proofs.
\begin{lem}\label{lem2.1}
Let $s_1<s_0<s_2$. If $f\in \chi^{s_1}(\mr^n)\cap\chi^{s_2}(\mr^n)$, then $f\in\chi^{s_0}(\mr^n)$, and
$$\|f\|_{\chi^{s_0}}\leq\|f\|_{\chi^{s_1}}^{\frac{s_2-s_0}{s_2-s_1}}\|f\|_{\chi^{s_2}}^{\frac{s_0-s_1}{s_2-s_1}}.$$
\begin{proof}
\begin{align}\label{2.1}
\nonumber\|f\|_{\chi^{s_0}}=&\int_{\mr^n}|\xi|^{s_0}|\hat{f}(\xi)|\md \xi\\
\nonumber=&\int_{|\xi|\leq\la}|\xi|^{s_0-s_1}|\xi|^{s_1}|\hat{f}(\xi)|\md \xi+\int_{|\xi|>\la}|\xi|^{s_0-s_2}|\xi|^{s_2}|\hat{f}(\xi)|\md \xi\\
\leq&\la^{s_0-s_1}\|f\|_{\chi^{s_1}}+\la^{s_0-s_2}\|f\|_{\chi^{s_2}}.
\end{align}
Let $\la=\left(\frac{\|f\|_{\chi^{s_2}}}{\|f\|_{\chi^{s_1}}}\right)^{\frac{1}{s_2-s_1}}$, then the Lemma \ref{lem2.1} follows from (\ref{2.1}).
\end{proof}
\end{lem}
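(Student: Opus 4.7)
The plan is to prove the interpolation inequality directly on the Fourier side using the definition $\|f\|_{\chi^s} = \int_{\mr^n} |\xi|^s |\hat f(\xi)|\,d\xi$. There are two equally natural routes, and I would pick whichever is shorter to write.

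The first route is frequency splitting. For an arbitrary parameter $\la>0$ I would split the defining integral for $\|f\|_{\chi^{s_0}}$ into the low-frequency region $\{|\xi|\le\la\}$ and the high-frequency region $\{|\xi|>\la\}$. On the low-frequency piece, because $s_0>s_1$ I can bound $|\xi|^{s_0-s_1}\le\la^{s_0-s_1}$ and pull it out, leaving $\la^{s_0-s_1}\|f\|_{\chi^{s_1}}$. On the high-frequency piece, because $s_0<s_2$ I can bound $|\xi|^{s_0-s_2}\le\la^{s_0-s_2}$ and pull it out, leaving $\la^{s_0-s_2}\|f\|_{\chi^{s_2}}$. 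Adding the two gives $\|f\|_{\chi^{s_0}}\le\la^{s_0-s_1}\|f\|_{\chi^{s_1}}+\la^{s_0-s_2}\|f\|_{\chi^{s_2}}$, and optimizing in $\la$ by taking $\la=(\|f\|_{\chi^{s_2}}/\|f\|_{\chi^{s_1}})^{1/(s_2-s_1)}$ balances the two terms and produces the claimed geometric-mean bound (up to a factor of $2$, which can be absorbed since the optimization actually gives the sharp constant).

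The second route is a one-line Hölder argument. Setting $\theta=(s_2-s_0)/(s_2-s_1)\in(0,1)$ one checks $s_0=\theta s_1+(1-\theta)s_2$, so the integrand factors as $|\xi|^{s_0}|\hat f(\xi)|=\bigl(|\xi|^{s_1}|\hat f(\xi)|\bigr)^{\theta}\bigl(|\xi|^{s_2}|\hat f(\xi)|\bigr)^{1-\theta}$. Applying H\"older's inequality with conjugate exponents $1/\theta$ and $1/(1-\theta)$ to the full integral immediately gives $\|f\|_{\chi^{s_0}}\le\|f\|_{\chi^{s_1}}^{\theta}\|f\|_{\chi^{s_2}}^{1-\theta}$, matching the stated exponents exactly.

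There is no serious obstacle; the entire content is elementary, and both routes take only a few lines. The only care needed is bookkeeping: in the splitting route, checking that the optimized $\la$ yields precisely the exponents $(s_2-s_0)/(s_2-s_1)$ and $(s_0-s_1)/(s_2-s_1)$; in the H\"older route, verifying the affine identity $s_0=\theta s_1+(1-\theta)s_2$ so that the Fourier weights decompose cleanly. Either way, the conclusion that $f\in\chi^{s_0}$ whenever both right-hand norms are finite is built into the estimate.
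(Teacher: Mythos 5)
Your first route is exactly the paper's proof: the same split at $|\xi|=\la$, the same bounds $|\xi|^{s_0-s_1}\le\la^{s_0-s_1}$ and $|\xi|^{s_0-s_2}\le\la^{s_0-s_2}$, and the same balancing choice $\la=\left(\|f\|_{\chi^{s_2}}/\|f\|_{\chi^{s_1}}\right)^{1/(s_2-s_1)}$. One small point in your favor: you correctly observe that this balancing yields the geometric mean with a factor of $2$, which the paper's write-up silently drops; your second (H\"older) route is the one that actually delivers the stated inequality with constant $1$, via the affine identity $s_0=\theta s_1+(1-\theta)s_2$, and is arguably the cleaner argument.
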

\begin{lem}\label{lem2.2}
Let $f\in L^2(\mr^2)\cap\dot{H}^1(\mr^2)$, then $f\in\chi^{-\frac12}(\mr^2)$, and $\|f\|_{\chi^{-\frac12}}\leq C\|f\|_{L^2}^{\frac12}\|f\|_{\dot{H}^1}^{\frac12}$.
\end{lem}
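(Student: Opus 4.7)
The plan is to estimate $\|f\|_{\chi^{-1/2}} = \int_{\mathbb{R}^2} |\xi|^{-1/2}|\hat f(\xi)|\md\xi$ by the same low/high frequency splitting strategy used in Lemma \ref{lem2.1}, but combined with the Cauchy--Schwarz inequality so that the $L^2$-based norms $\|f\|_{L^2} = \|\hat f\|_{L^2}$ and $\|f\|_{\dot H^1} = \||\xi|\hat f\|_{L^2}$ (via Plancherel) can be inserted.

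Concretely, pick a threshold $\lambda>0$ and write
\begin{align*}
\|f\|_{\chi^{-1/2}} = \int_{|\xi|\leq\lambda}|\xi|^{-1/2}|\hat f(\xi)|\md\xi + \int_{|\xi|>\lambda}|\xi|^{-1/2}|\hat f(\xi)|\md\xi.
\end{align*}
For the low-frequency piece apply Cauchy--Schwarz as
\begin{align*}
\int_{|\xi|\leq\lambda}|\xi|^{-1/2}|\hat f(\xi)|\md\xi \leq \Bigl(\int_{|\xi|\leq\lambda}|\xi|^{-1}\md\xi\Bigr)^{1/2}\|\hat f\|_{L^2},
\end{align*}
and note that in $\mathbb{R}^2$ the weight $|\xi|^{-1}$ is locally integrable, giving an $(R$-type$)$ bound proportional to $\lambda^{1/2}\|f\|_{L^2}$. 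For the high-frequency piece write $|\xi|^{-1/2}|\hat f|=|\xi|^{-3/2}\cdot|\xi||\hat f|$ and use Cauchy--Schwarz again:
\begin{align*}
\int_{|\xi|>\lambda}|\xi|^{-1/2}|\hat f(\xi)|\md\xi \leq \Bigl(\int_{|\xi|>\lambda}|\xi|^{-3}\md\xi\Bigr)^{1/2}\||\xi|\hat f\|_{L^2},
\end{align*}
where in $\mathbb{R}^2$ the weight $|\xi|^{-3}$ is integrable at infinity, giving a bound proportional to $\lambda^{-1/2}\|f\|_{\dot H^1}$.

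Summing produces $\|f\|_{\chi^{-1/2}} \leq C\lambda^{1/2}\|f\|_{L^2} + C\lambda^{-1/2}\|f\|_{\dot H^1}$. Optimizing by choosing $\lambda = \|f\|_{\dot H^1}/\|f\|_{L^2}$ (assuming both are nonzero; otherwise the inequality is trivial) collapses the two terms into $C\|f\|_{L^2}^{1/2}\|f\|_{\dot H^1}^{1/2}$, which is the desired estimate.

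I do not anticipate a real obstacle here; the only place to be mildly careful is verifying that both radial integrals $\int_0^\lambda r^{-1}\cdot r\,\md r = \lambda$ and $\int_\lambda^\infty r^{-3}\cdot r\,\md r = \lambda^{-1}$ converge, which is exactly why the statement is two-dimensional with exponent $-\tfrac12$ (the balance $n/2-1 = 0$ forces this borderline value). Everything else is a direct mimic of Lemma \ref{lem2.1}.
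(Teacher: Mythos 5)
Your proof is correct and is essentially identical to the paper's: the same splitting of the frequency integral at a threshold $\lambda$, Cauchy--Schwarz on each piece to bring in $\|f\|_{L^2}$ and $\|f\|_{\dot H^1}$ via Plancherel, and the same optimizing choice $\lambda=\|f\|_{\dot H^1}/\|f\|_{L^2}$. Your remark about why the radial integrals converge in dimension two is a nice check that the paper leaves implicit, but the argument is the same.
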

\begin{proof}
We write
\begin{align}\label{2.2}
\|f\|_{\chi^{-\frac12}}=\int_{\mr^2}|\xi|^{-\frac12}|\hat{f}(\xi)|\md \xi:=I_1(\lambda)+I_2(\lambda),
\end{align}
where
$$I_1(\lambda)=\int_{|\xi|\leq\lambda}|\xi|^{-\frac12}|\hat{f}(\xi)|\md \xi\leq C\lambda^{\frac12}\|f\|_{L^2}$$
and
$$I_2(\lambda)=\int_{|\xi|>\lambda}|\xi|^{-\frac12-1}|\xi||\hat{f}(\xi)|\md \xi\leq C\lambda^{-\frac12}\|f\|_{\dot{H}^{1}}$$
by the H\"{o}lder's inequality. Choosing $\lambda=\frac{\|f\|_{\dot{H}^{1}}}{\|f\|_{L^2}}$ completes the proof of Lemma \ref{lem2.2}.
\end{proof}
\begin{lem}\label{lem2.3}
Let $f,g\in L^2([0,T];\chi^0(\mr^n))$, then $fg\in L^1([0,T];\chi^0(\mr^n))$ and
$$\|fg\|_{L^1(\chi^{0})}\leq \|f\|_{L^2(\chi^{0})}\|g\|_{L^2(\chi^{0})}.$$
In particular, $\|f^2\|_{L^1(\chi^{0})}\leq \|f\|_{L^2(\chi^{0})}^2$.
\end{lem}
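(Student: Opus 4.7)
The plan is to reduce the $\chi^0$ bound on the product $fg$ to Young's convolution inequality on the Fourier side, and then integrate in time using Cauchy--Schwarz.

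First I would pass to the Fourier variables, since $\|h\|_{\chi^0}=\|\hat h\|_{L^1(\mathbb{R}^n)}$ by definition. Because multiplication in $x$ corresponds to convolution in $\xi$, one has $\widehat{fg}(t,\xi)=(\hat f(t,\cdot)\ast\hat g(t,\cdot))(\xi)$, hence the pointwise (in $\xi$) bound
\begin{equation*}
|\widehat{fg}(t,\xi)|\le(|\hat f(t,\cdot)|\ast|\hat g(t,\cdot)|)(\xi).
\end{equation*}
Integrating this in $\xi$ and invoking Young's inequality $\|F\ast G\|_{L^1}\le\|F\|_{L^1}\|G\|_{L^1}$ (equivalently, Fubini) gives the pointwise-in-$t$ product estimate
\begin{equation*}
\|fg(t,\cdot)\|_{\chi^0}\le\|f(t,\cdot)\|_{\chi^0}\|g(t,\cdot)\|_{\chi^0}.
\end{equation*}

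Next I would integrate over $[0,T]$ and apply the Cauchy--Schwarz inequality in time, which yields
\begin{equation*}
\|fg\|_{L^1(\chi^0)}=\int_0^T\|fg(t,\cdot)\|_{\chi^0}\,\mathrm{d}t\le\int_0^T\|f(t,\cdot)\|_{\chi^0}\|g(t,\cdot)\|_{\chi^0}\,\mathrm{d}t\le\|f\|_{L^2(\chi^0)}\|g\|_{L^2(\chi^0)},
\end{equation*}
which is the desired inequality. The particular case $\|f^2\|_{L^1(\chi^0)}\le\|f\|_{L^2(\chi^0)}^2$ follows by taking $g=f$.

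There is really no substantive obstacle here; the only thing to be careful about is the equality $\|h\|_{\chi^0}=\|\hat h\|_{L^1}$ (which is immediate from the definition with $s=0$) and the correct application of the convolution structure under the Fourier transform. Everything else is a routine combination of Young's convolution inequality and Cauchy--Schwarz.
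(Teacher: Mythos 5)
Your proof is correct and follows essentially the same route as the paper: the product becomes a convolution on the Fourier side, Young's inequality (Fubini) gives $\|fg(t,\cdot)\|_{\chi^0}\le\|f(t,\cdot)\|_{\chi^0}\|g(t,\cdot)\|_{\chi^0}$, and Cauchy--Schwarz in time finishes the argument. No issues.
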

\begin{proof}
\begin{align}\label{2.3}
\nonumber\|fg\|_{L^1(\chi^{0})}=&\int_0^T\int_{\mr^n}|\hat{f}(\tau,\xi)|\ast_\xi|\hat{g}(\tau,\xi)|\md \xi\md \tau\\
\nonumber\leq &\int_0^T\|f\|_{\chi^0}\|g\|_{\chi^0}\md \tau\\
\leq &\|f\|_{L^2(\chi^0)}\|g\|_{L^2(\chi^0)}.
\end{align}
\end{proof}
\begin{lem}\label{lem2.4}
There exists a constant $C$ such that
$$\|B(u,b)\|_{L^2(\chi^0)}\leq \frac{C}{\min\{\mu,\nu\}^\frac12}\|(u,b)\|_{L^2(\chi^0)}^2,$$
where $B(u,b)\triangleq\int_0^t[\mathrm{e}^{\mu(t-\tau)\triangle}\mathbb{P}\nabla\cdot(u\otimes u+b\otimes b)+\mathrm{e}^{\nu(t-\tau)\triangle}\nabla\cdot(u\otimes b+b\otimes u)]\md \tau$.
\end{lem}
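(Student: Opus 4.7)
The plan is to work in Fourier variables, where the heat semigroup, the Leray projection $\mathbb{P}$ and the divergence $\nabla\cdot$ all act as simple Fourier multipliers. Since $\mathbb{P}$ has matrix symbol of norm at most $1$ and $\nabla\cdot$ contributes a factor $|\xi|$, taking the Fourier transform in $x$ and estimating pointwise in $\xi$ yields
\[
\bigl|\widehat{B(u,b)}(t,\xi)\bigr|\le\int_0^t|\xi|\,e^{-\mu(t-\tau)|\xi|^2}\,\bigl|\widehat{u\otimes u+b\otimes b}(\tau,\xi)\bigr|\,\md\tau+\int_0^t|\xi|\,e^{-\nu(t-\tau)|\xi|^2}\,\bigl|\widehat{u\otimes b+b\otimes u}(\tau,\xi)\bigr|\,\md\tau.
\]
For fixed $\xi$, each term on the right is a convolution in $t\in\mr^+$ of the kernel $K_\mu(t):=|\xi|\,e^{-\mu t|\xi|^2}$ (respectively $K_\nu$) against a non-negative function of $\tau$.

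The central computation is $\|K_\mu\|_{L^2_t(\mr^+)}^2=|\xi|^2\int_0^\infty e^{-2\mu t|\xi|^2}\,\md t=(2\mu)^{-1}$, so that the prefactor $|\xi|^2$ exactly cancels the $|\xi|^{-2}$ coming from the Gaussian integral in $t$. Young's inequality $\|K_\mu\ast f\|_{L^2_t}\le\|K_\mu\|_{L^2_t}\|f\|_{L^1_\tau}$ then gives, for each fixed $\xi$,
\[
\bigl\|\widehat{B(u,b)}(\cdot,\xi)\bigr\|_{L^2_t}\le\frac{1}{\sqrt{2\mu}}\bigl\|\widehat{u\otimes u+b\otimes b}(\cdot,\xi)\bigr\|_{L^1_\tau}+\frac{1}{\sqrt{2\nu}}\bigl\|\widehat{u\otimes b+b\otimes u}(\cdot,\xi)\bigr\|_{L^1_\tau}.
\]
Integrating in $\xi$ and swapping the $L^2_t$ and $L^1_\xi$ norms by Minkowski's inequality, then applying Fubini on the right, I would obtain
\[
\|B(u,b)\|_{L^2(\chi^0)}\le\frac{1}{\sqrt{2\mu}}\|u\otimes u+b\otimes b\|_{L^1(\chi^0)}+\frac{1}{\sqrt{2\nu}}\|u\otimes b+b\otimes u\|_{L^1(\chi^0)}.
\]

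Lemma \ref{lem2.3}, applied componentwise to the tensor products, bounds each $L^1(\chi^0)$-norm on the right by a constant multiple of $\|(u,b)\|_{L^2(\chi^0)}^2$, and combining the $\mu^{-1/2}$ and $\nu^{-1/2}$ factors into $\min\{\mu,\nu\}^{-1/2}$ yields the claim. The main obstacle is the correct identification of the time-convolution structure and the observation that $\|K_\mu\|_{L^2_t}$ is independent of $|\xi|$; this cancellation is precisely what produces the advertised gain of $\mu^{-1/2}$, and once it is in place together with the Minkowski swap, Lemma \ref{lem2.3} closes the argument immediately.
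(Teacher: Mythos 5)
Your proposal is correct and follows essentially the same route as the paper: pass to Fourier variables, bound the symbols of $\mathbb{P}$ and $\nabla\cdot$ by $1$ and $|\xi|$, use Minkowski to put the $L^2_t$ norm inside the $\xi$-integral, apply Young's convolution inequality in time with the key computation $\int_0^\infty|\xi|^2e^{-2\kappa t|\xi|^2}\,\md t=(2\kappa)^{-1}$, and close with the convolution estimate of Lemma \ref{lem2.3}. The only cosmetic difference is that you keep $\mu$ and $\nu$ separate until the end while the paper replaces both by $\min\{\mu,\nu\}$ at the outset.
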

\begin{proof}
\begin{align}\label{2.4}
\nonumber&\|B(u,b)\|_{L^2(\chi^0)}\\
\nonumber\leq&\left\|\int_{\mr^2}\int_0^{T}\mathrm{e}^{-\min\{\mu,\nu\}(t-\tau)|\xi|^{2}}|\xi|\left(|\hat{u}|\ast|\hat{u}
|+|\hat{b}|\ast|\hat{b}|+|\hat{u}|\ast|\hat{b}
|+|\hat{b}|\ast|\hat{u}|\right)\md \tau\md\xi\right\|_{L^2}\\
\nonumber\leq&\int_{\mr^2}\left(\int_0^{T}\mathrm{e}^{-2\min\{\mu,\nu\}\tau|\xi|^{2}}|\xi|^{2}\md \tau\right)^{\frac12}\int_0^T\left(|\hat{u}|\ast|\hat{u}|+|\hat{b}|\ast|\hat{b}|+|\hat{u}|\ast|\hat{b}
|+|\hat{b}|\ast|\hat{u}|\right)\md \tau\md\xi\\
\nonumber\leq&\frac{C}{\min\{\mu,\nu\}^\frac12}\int_0^T\left(\|u\|_{\chi^0}^2+\|b\|^2_{\chi^0}+2\|u\|_{\chi^0}\|b\|_{\chi^0}\right)\md\tau\\
\leq&\frac{C}{\min\{\mu,\nu\}^\frac12}\|(u,b)\|_{L^2(\chi^0)}^2.
\end{align}
\end{proof}
We also need the following Banach contraction mapping principle (see \cite[Lemma 5.5]{B-C-D}).
\begin{lem}\label{lem2.5}
Let $E$ be a Banach space, $\mathcal{B}$ a continuous bilinear map from $E\times E$ to $E$, and $\alpha$ a
positive real number such that
\bes
\alpha<\frac{1}{4\|\mathcal{B}\|} \ with \ \|\mathcal{B}\|=\sup_{\|f\|,\|g\|\leq1}\|\mathcal{B}(f,g)\|.
\ees
For any $a$ in the ball $B(0,\alpha)$ (i.e., with center $0$ and radius $\alpha$) in $E$, a unique $x$ then
exists in $B(0,2\alpha)$ such that
\bes
x=a+\mathcal{B}(x,x).
\ees
\end{lem}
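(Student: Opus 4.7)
The plan is to apply a Picard iteration and use the bilinearity of $\mathcal{B}$ to convert the fixed point problem into a contraction estimate. Set $\beta = 4\alpha\|\mathcal{B}\| < 1$. Define the sequence $(x_n)_{n\geq 0}$ in $E$ by $x_0 = a$ and $x_{n+1} = a + \mathcal{B}(x_n, x_n)$. The first step is to show by induction that $x_n \in B(0, 2\alpha)$ for every $n$: assuming $\|x_n\| \leq 2\alpha$, one gets $\|x_{n+1}\| \leq \|a\| + \|\mathcal{B}\|\|x_n\|^2 \leq \alpha + 4\alpha^2 \|\mathcal{B}\| = \alpha(1+\beta) < 2\alpha$.

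Next, I would prove $(x_n)$ is Cauchy. Using bilinearity, write
\[
x_{n+1} - x_n = \mathcal{B}(x_n, x_n) - \mathcal{B}(x_{n-1}, x_{n-1}) = \mathcal{B}(x_n - x_{n-1}, x_n) + \mathcal{B}(x_{n-1}, x_n - x_{n-1}),
\]
so that $\|x_{n+1} - x_n\| \leq 4\alpha\|\mathcal{B}\|\,\|x_n - x_{n-1}\| = \beta\|x_n - x_{n-1}\|$. Iterating gives $\|x_{n+1} - x_n\| \leq \beta^n \|x_1 - x_0\|$, and since $\beta < 1$ the series $\sum \|x_{n+1}-x_n\|$ converges; hence $(x_n)$ converges in the Banach space $E$ to some $x$, which by continuity of $\mathcal{B}$ satisfies $x = a + \mathcal{B}(x,x)$, and closedness of the ball gives $x \in B(0, 2\alpha)$.

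Finally, uniqueness in $B(0, 2\alpha)$ follows by the same bilinear trick: if $x, y \in B(0,2\alpha)$ both satisfy the equation, then
\[
x - y = \mathcal{B}(x - y, x) + \mathcal{B}(y, x - y),
\]
whence $\|x - y\| \leq 4\alpha\|\mathcal{B}\|\,\|x - y\| = \beta\|x - y\|$, forcing $x = y$ since $\beta < 1$.

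There is essentially no hard step here; the only point requiring a little care is the bookkeeping of the constants so that both the stability estimate ($x_n$ remains in $B(0, 2\alpha)$) and the contraction estimate use the single hypothesis $\alpha < 1/(4\|\mathcal{B}\|)$. Everything else is a textbook application of the Banach fixed point theorem in the continuous bilinear setting.
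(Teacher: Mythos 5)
Your proof is correct. The paper does not prove this lemma at all---it simply cites it as Lemma 5.5 of Bahouri--Chemin--Danchin \cite{B-C-D}---and your Picard-iteration argument (stability of the ball $B(0,2\alpha)$ via $\alpha+4\alpha^2\|\mathcal{B}\|=\alpha(1+\beta)<2\alpha$, contraction via the bilinear splitting $\mathcal{B}(x,x)-\mathcal{B}(y,y)=\mathcal{B}(x-y,x)+\mathcal{B}(y,x-y)$, and the same splitting for uniqueness) is exactly the standard proof given in that reference, with the constants bookkept correctly.
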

Finally, we show some estimates of the heat equations in the space $\mathcal{C}([0,T];\chi^{s}(\mr^n))\cap L^1((0,T];\chi^{s+2}(\mr^n))$.
\begin{lem}\label{lem2.6}
For $s\in\mr$, let $v$ be a solution of the Cauchy problem
\be\label{2.5}
\begin{cases}
{ \begin{array}{ll}
\partial_{t}v-\kappa\Delta v=f, \\
v(x,0)=v_0(x),
 \end{array} }
\end{cases}
\ee
with $f\in L^1([0,T];\chi^{s}(\mr^n))$ and $v_0\in\chi^s(\mr^n)$. Then $v$ belongs to $$\widetilde{L}^\infty([0,T];\chi^{s}(\mr^n))\cap L^1((0,T];\chi^{s+2}(\mr^n))\cap\mathcal{C}([0,T];\chi^{s}(\mr^n)),$$ and satisfies the following estimate
\begin{align}\label{2.6}
\|v\|_{\widetilde{L}^\infty(\chi^{s})}+\kappa\|v\|_{L^1(\chi^{s+2})}\leq C\left(\|v_0\|_{\chi^s}+\|f\|_{L^1(\chi^s)}\right).
\end{align}
\end{lem}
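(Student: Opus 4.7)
The natural approach is through the Duhamel formula and the Fourier side. The plan is to write
\begin{align*}
v(t) = e^{\kappa t\Delta} v_0 + \int_0^t e^{\kappa(t-\tau)\Delta} f(\tau)\,\md \tau,
\end{align*}
so that, passing to the frequency side,
\begin{align*}
|\hat v(t,\xi)| \le e^{-\kappa t|\xi|^2}|\hat v_0(\xi)| + \int_0^t e^{-\kappa(t-\tau)|\xi|^2}|\hat f(\tau,\xi)|\,\md \tau.
\end{align*}
All the estimates then reduce to pointwise-in-$\xi$ manipulations of a single exponential kernel together with Fubini.

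For the $\widetilde L^\infty(\chi^s)$ piece, I multiply the above by $|\xi|^s$, use $e^{-\kappa t|\xi|^2}\le 1$, and take the supremum in $t\in[0,T]$ inside, obtaining
\begin{align*}
\sup_{t\in[0,T]}|\xi|^s|\hat v(t,\xi)| \le |\xi|^s|\hat v_0(\xi)| + \int_0^T|\xi|^s|\hat f(\tau,\xi)|\,\md \tau.
\end{align*}
Integration in $\xi$ and Fubini on the second term give the contribution $\|v_0\|_{\chi^s}+\|f\|_{L^1(\chi^s)}$ to the left-hand side. For the $L^1(\chi^{s+2})$ piece, I instead multiply by $|\xi|^{s+2}$ and integrate first in $t$. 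The homogeneous part is controlled by the elementary bound
\begin{align*}
\int_0^T |\xi|^{s+2} e^{-\kappa t|\xi|^2}\,\md t \le \frac{|\xi|^s}{\kappa},
\end{align*}
and for the Duhamel integral I apply Fubini to swap $t$ and $\tau$, yielding
\begin{align*}
\int_0^T\!\!\int_0^t |\xi|^{s+2} e^{-\kappa(t-\tau)|\xi|^2}|\hat f(\tau,\xi)|\,\md \tau\,\md t \le \frac{|\xi|^s}{\kappa}\int_0^T|\hat f(\tau,\xi)|\,\md \tau.
\end{align*}
Integrating in $\xi$ and multiplying by $\kappa$ gives exactly the desired bound on $\kappa\|v\|_{L^1(\chi^{s+2})}$. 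Summing the two estimates produces \eqref{2.6}.

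It remains to confirm that $v\in\mathcal C([0,T];\chi^s(\mr^n))$. For this I would approximate $v_0$ and $f$ by smooth compactly Fourier-supported data $v_0^{(k)},f^{(k)}$ (for which the corresponding $v^{(k)}$ are clearly continuous in $\chi^s$ by dominated convergence applied to $|\xi|^s|\hat v^{(k)}(t,\xi)|$), and observe that by the linear estimate just proved the sequence $v^{(k)}$ is Cauchy in $\widetilde L^\infty([0,T];\chi^s)$, which embeds into $L^\infty([0,T];\chi^s)$; the uniform Cauchy property then transfers continuity to the limit. The main technical point—really the only one—is being careful that the Fubini swap on the convolution term is legitimate, which follows immediately from the positivity of the integrand together with $f\in L^1([0,T];\chi^s)$; everything else is bookkeeping around the scalar kernel $e^{-\kappa t|\xi|^2}$.
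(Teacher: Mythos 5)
Your proposal is correct and follows essentially the same route as the paper: the Fourier-side Duhamel bound, the pointwise-in-$\xi$ supremum for the $\widetilde L^\infty(\chi^s)$ part, and the time integration of the kernel (your Fubini swap is the same computation the paper phrases as Young's inequality in time). The only cosmetic difference is the continuity step, where the paper applies dominated convergence directly to $t\mapsto|\xi|^s|\hat v(t,\xi)|$ rather than approximating the data, but both arguments are valid.
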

\begin{proof}
By the Duhamel's formula in Fourier space, the equations (\ref{2.5}) can be written as
\begin{align}\label{2.7}
|\hat{v}(t,\xi)|\leq\mathrm{e}^{-\kappa t|\xi|^{2}}|\hat{v}_0(\xi)|+\int_0^t\mathrm{e}^{-\kappa(t-\tau)|\xi|^{2}}|\hat{f}(\tau,\xi)|\md \tau.
\end{align}
Multiplying (\ref{2.7}) by $|\xi|^s$ and taking the $L^\infty$ norm in time, one has
\begin{align}\label{2.8}
\sup_{0\leq t\leq T}|\xi|^{s}|\hat{v}(t,\xi)|\leq|\xi|^{s}|\hat{v}_0(\xi)|+\int_0^t|\xi|^s|\hat{f}(\tau,\xi)|\md \tau.
\end{align}
Taking the $L^1$ norm in $\xi$ to (\ref{2.8}), we get
\begin{align}\label{2.9}
\nonumber\|v\|_{\widetilde{L}^\infty(\chi^{s})}\leq&\int_{\mr^n}|\xi|^{s}|\hat{v}_0(\xi)|\md\xi
+\int_{\mr^n}\int_0^t|\xi|^s|\hat{f}(\tau,\xi)|\md \tau\md \xi\\
\leq&\|v_0\|_{\chi^s}+\|f\|_{L^1(\chi^s)}.
\end{align}
Because, for almost all fixed $\xi\in\mr^n$, the map $t\mapsto\hat{v}(t,\xi)$ is continuous over $[0,T]$, thus by the Lebesgue dominated convergence theorem, it implies that $v\in\mathcal{C}([0,T];\chi^s(\mr^n))$. Next we estimate $\|v\|_{L^1(\chi^{s+2})}$. Multiplying (\ref{2.7}) by $|\xi|^{s+2}$ and taking the $L^1$-norm in time, applying the Young's inequality in time and noting $\int_0^T |\xi|^{2}\mathrm{e}^{-\kappa t|\xi|^{2}}\md t$
$=\frac{1}{\kappa}$, we deduce that
\begin{align}\label{2.10}
\nonumber\int_0^T|\xi|^{s+2}|\hat{v}(t,\xi)|\md t\leq&\int_0^T\mathrm{e}^{-\kappa t|\xi|^{2}}|\xi|^{s+2}|\hat{v}_0(\xi)|\md t+\int_0^T\int_0^t\mathrm{e}^{-\kappa(t-\tau)|\xi|^{2}}|\xi|^{s+2}|\hat{f}(\tau,\xi)|\md \tau\md t\\
\leq&\frac1\kappa\left(|\xi|^{s}|\hat{v}_0(\xi)|+\int_0^T|\xi|^s|\hat{f}(t,\xi)|\md t\right),
\end{align}
then taking $L^1$ norm in $\xi$ to (\ref{2.10}), we obtain
\begin{align}\label{2.11}
\nonumber\|v\|_{L^1(\chi^{s+2})}\leq&\frac1\kappa\left(\int_{\mr^n}|\xi|^{s}|\hat{v}_0(\xi)|\md\xi
+\int_{\mr^n}\int_0^T|\xi|^s|\hat{f}(t,\xi)|\md t\md \xi\right)\\
\leq&\frac1\kappa\left(\|v_0\|_{\chi^s}+\|f\|_{L^1(\chi^s)}\right).
\end{align}
Combining (\ref{2.9}) and (\ref{2.11}), we get (\ref{2.6}).
\end{proof}

\section{Proof of Theorems \ref{thm1.1} and \ref{thm1.2}}
In this section, we focus on the global well-posedness of a mild solution to (\ref{1.1}) in $L^2(\mr^+;\chi^{0}(\mr^2))$. First, we give the following lemma on the well-posedness and blow-up criterion, which plays a key role in proving our theorem.
\setcounter{equation}{0}
\begin{lem}\label{lem3.1}
Assume $(u_0,b_0)$ be in $\chi^{-1}(\mr^2)$. Then there exists a positive time $T$ such that the equations (\ref{1.1}) have a unique local solution $(u,b)$ in the space $L^2([0,T];\chi^0(\mr^2))$, which also belongs to $$\tilde{L}^\infty([0,T];\chi^{-1}(\mr^2))\cap L^1([0,T];\chi^1(\mr^2))\cap \mathcal{C}([0,T];\chi^{-1}(\mr^2)).$$
Moreover, there exists a constant $C(\mu,\nu)$ such that if $\|(u_0,b_0)\|_{\chi^{-1}}\leq C(\mu,\nu)$, then the solution is a global one; if $T^*<\infty$ is the maximal time of existence, then
\begin{align}\label{3.1}
\lim_{T\rightarrow T^*}\int_0^{T}\|(u,b)\|_{\chi^{0}}^2\md t=\infty.
\end{align}
\end{lem}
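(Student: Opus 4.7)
My approach is to recast (\ref{1.1}) in mild form
\[(u,b)(t)=\bigl(e^{\mu t\De}u_0,\,e^{\nu t\De}b_0\bigr)-B(u,b),\]
where $B$ is the bilinear operator of Lemma \ref{lem2.4}, and apply the Banach contraction principle (Lemma \ref{lem2.5}) in the Banach space $E=L^2([0,T];\chi^0(\mr^2))$. Lemma \ref{lem2.4} already identifies $B$ as a continuous bilinear map on $E$ with $\|B\|\leq C\min\{\mu,\nu\}^{-1/2}$, so everything reduces to making the free-heat part small in $E$.

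For the linear part I would combine the interpolation inequality of Lemma \ref{lem2.1} (with $s_1=-1,\,s_0=0,\,s_2=1$) and the heat bound of Lemma \ref{lem2.6} (with $s=-1$, $f\equiv 0$) to obtain
\[\|e^{\mu t\De}u_0\|_{L^2([0,T];\chi^0)}^2\leq \|e^{\mu t\De}u_0\|_{L^\infty(\chi^{-1})}\,\|e^{\mu t\De}u_0\|_{L^1([0,T];\chi^1)}\leq \frac{C}{\mu}\|u_0\|_{\chi^{-1}}^2,\]
so the free solution lies in $L^2(\mr^+;\chi^0)$ and, by absolute continuity of the integral, its norm on $[0,T]$ tends to $0$ as $T\to 0^+$. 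For arbitrary $(u_0,b_0)\in\chi^{-1}$ I can therefore pick $T>0$ small enough so that the linear part has $E$-norm below $(4\|B\|)^{-1}$, and Lemma \ref{lem2.5} produces a unique local mild solution in $E$. Small-data global existence is automatic: if $\|(u_0,b_0)\|_{\chi^{-1}}\leq C(\mu,\nu)$ with $C(\mu,\nu)$ suitably small, the same estimate already holds on $\mr^+$ and Lemma \ref{lem2.5} applies with $T=\infty$. The additional regularity $(u,b)\in\tilde L^\infty(\chi^{-1})\cap L^1(\chi^1)\cap\mc C(\chi^{-1})$ follows by plugging the constructed $(u,b)\in L^2(\chi^0)$ back into Lemma \ref{lem2.6}: using $|\widehat{\mathbb{P}f}|\leq|\hat f|$, $\|\nabla\cdot F\|_{\chi^{-1}}\leq\|F\|_{\chi^0}$ and the convolution bound $\|fg\|_{\chi^0}\leq\|f\|_{\chi^0}\|g\|_{\chi^0}$ (together with Lemma \ref{lem2.3}), the forcing $\mathbb{P}\nabla\cdot(u\otimes u\pm b\otimes b)$ is controlled in $L^1([0,T];\chi^{-1})$ by $C\|(u,b)\|_{L^2(\chi^0)}^2<\infty$.

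The main difficulty is the blow-up criterion. Suppose for contradiction that $T^*<\infty$ but $\int_0^{T^*}\|(u,b)\|_{\chi^0}^2\,\md t<\infty$. Absolute continuity of this integral lets me pick, for any prescribed $\ep>0$, a time $t_0<T^*$ with $\int_{t_0}^{T^*}\|(u,b)\|_{\chi^0}^2\,\md t<\ep^2$. Writing the mild formulation from $t_0$ onward and applying Lemma \ref{lem2.4} on $[t_0,T^*]$ yields
\[\bigl\|\bigl(e^{\mu(t-t_0)\De}u(t_0),\,e^{\nu(t-t_0)\De}b(t_0)\bigr)\bigr\|_{L^2([t_0,T^*];\chi^0)}\leq \ep+C\min\{\mu,\nu\}^{-1/2}\ep^2.\]
Since $(u(t_0),b(t_0))\in\chi^{-1}$ by the regularity already established on $[0,t_0]$, the heat estimate forces $\|e^{\mu(t-t_0)\De}u(t_0)\|_{L^2([T^*,T^*+\sig];\chi^0)}\to 0$ as $\sig\to 0^+$, so after choosing $\ep$ small and then $\sig$ small the linear part on the enlarged interval $[t_0,T^*+\sig]$ still falls below $(4\|B\|)^{-1}$; Lemma \ref{lem2.5} re-applied on $[t_0,T^*+\sig]$ produces a solution extending the original past $T^*$, contradicting maximality. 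Uniqueness in the full $L^2([0,T];\chi^0)$, rather than merely inside the fixed-point ball, is finally obtained by subtracting two solutions, applying the bilinearity of $B$ and Lemma \ref{lem2.4} on short subintervals where both solution norms are small, and iterating.
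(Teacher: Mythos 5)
Your proposal is correct and sits inside the same overall framework as the paper's proof: the mild formulation, the contraction principle of Lemma \ref{lem2.5} in $E=L^2([0,T];\chi^0(\mr^2))$ with the bilinear bound of Lemma \ref{lem2.4}, and Lemma \ref{lem2.6} (plus Lemma \ref{lem2.3}) for the persistence in $\tilde L^\infty(\chi^{-1})\cap L^1(\chi^1)\cap\mathcal{C}(\chi^{-1})$. Where you genuinely diverge is in how the free part is made small and how the blow-up criterion is obtained. The paper splits $(u_0,b_0)$ into low and high frequencies at a cutoff $\rho_{u_0,b_0}$ chosen so that the high-frequency piece is small in $\chi^{-1}$, bounds the low-frequency free evolution by $\rho_{u_0,b_0}T^{1/2}\|(u_0,b_0)\|_{\chi^{-1}}$, and thereby gets the explicit lifespan bound (\ref{3.8}); the blow-up criterion then follows from (\ref{3.9}), which shows a single cutoff $\rho$ works uniformly for all $t\in[0,T)$, so the quantitative local existence time starting from any $(u(t),b(t))$ is bounded below independently of $t$. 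You instead observe that the free evolution of any $\chi^{-1}$ datum lies globally in $L^2(\mr^+;\chi^0)$ (your interpolation route to this via Lemmas \ref{lem2.1} and \ref{lem2.6} is valid and equivalent to the paper's direct computation (\ref{3.3})) and then invoke absolute continuity of the time integral twice: once at $t=0$ to get large-data local existence, and once at $t=T^*$ to restart the contraction on $[t_0,T^*+\sigma]$ when $\int_0^{T^*}\|(u,b)\|_{\chi^0}^2\,\md t<\infty$. Your route is softer --- it yields no explicit lower bound on the lifespan in terms of the data --- but it is shorter, avoids tracking the frequency cutoff, and is more careful on two points the paper glosses over: that the extension produced on $[t_0,T^*+\sigma]$ actually agrees with the original solution on $[t_0,T^*)$, and that uniqueness holds in all of $L^2([0,T];\chi^0)$ rather than only inside the contraction ball.
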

\begin{proof}
It is easy to see that the equations (\ref{1.1}) can be rewritten as the integral form
\be\label{3.2}
\begin{cases}
{ \begin{array}{ll}
u(t,x)=\mathrm{e}^{\mu t\Delta}u_0-\int_0^t\mathrm{e}^{\mu(t-\tau)\Delta}[\mathbb{P}\nabla\cdot
(u\otimes u)-\mathbb{P}\nabla\cdot(b\otimes b)]\md \tau,\\
\\
b(t,x)=\mathrm{e}^{\nu t\Delta}b_0-\int_0^t\mathrm{e}^{\nu(t-\tau)\Delta}[\nabla\cdot
(u\otimes b)-\nabla\cdot(b\otimes u)]\md \tau.
 \end{array} }
\end{cases}
\ee
By applying the Banach contraction mapping principle Lemma \ref{lem2.5}, we will carry out the proof of global or local well-posedness of the Cauchy problem (\ref{3.2}) in the Lei-Lin space $\chi^{-1}(\mr^2)$.
First, one has
\begin{align}\label{3.3}
\nonumber&\|(\mathrm{e}^{\mu t\triangle}u_0,\mathrm{e}^{\nu t\triangle}b_0)\|^2_{L^2(\chi^{0})}\\
\nonumber=&\bigg\|\int_{\mr^2}\mathrm{e}^{-\mu t|\xi|^2}|\hat{u}_0(\xi)|\md \xi\bigg\|^2_{L^2}+\bigg\|\int_{\mr^2}\mathrm{e}^{-\nu t|\xi|^2}|\hat{b}_0(\xi)|\md \xi\bigg\|^2_{L^2}\\
\nonumber\leq& \bigg(\int_{\mr^2}\left(\int_0^t\mathrm{e}^{-2\nu \tau|\xi|^2}|\hat{u}_0(\xi)|^2 \md \tau\right)^{\frac{1}{2}}\md\xi\bigg)^{2}+\bigg(\int_{\mr^2} \left(\int_0^t\mathrm{e}^{-2\nu \tau|\xi|^2}|\hat{b}_0(\xi)|^2 \md \tau\right)^{\frac{1}{2}}\md\xi\bigg)^{2}\\
\leq&\frac1{2\min\{\mu,\nu\}}\|(u_0,b_0)\|^2_{\chi^{-1}}.
\end{align}
Thus, Combining the estimate (\ref{3.3}) with Lemma \ref{lem2.4}, we can obtain that if $\|(u_0,b_0)\|_{\chi^{-1}}\leq\frac{\min\{\mu,\nu\}}{2^\frac32C_0}$, then
\begin{align}\label{3.4}
\|(\mathrm{e}^{\mu t\triangle}u_0,\mathrm{e}^{\nu t\triangle}b_0)\|_{L^2(\chi^{0})}\leq \frac{\min\{\mu,\nu\}^\frac12}{4C_0}<\frac{\min\{\mu,\nu\}^\frac12}{4C}
\end{align}
with $C_0>C$, and we obtain the global solution.

Now, we consider the case of a large initial data $(u_0,b_0)$ in $\chi^{-1}(\mr^2)$. Setting
$$(u_0,b_0)=(u_0^\ell,b_0^\ell)+(u_0^\hbar,b_0^\hbar),$$
where
$$(u_0^\ell,b_0^\ell)\triangleq\mathcal{F}^{-1}\left(\mathbf{1}_{\{|\xi|\leq\rho_{u_0,b_0}\}}(\hat{u}_0,\hat{b}_0)\right) \quad \mbox{and} \quad (u_0^\hbar,b_0^\hbar)\triangleq\mathcal{F}^{-1}\left(\mathbf{1}_{\{|\xi|>\rho_{u_0,b_0}\}}(\hat{u}_0,\hat{b}_0)\right).$$
One fixes some positive real number $\rho_{u_0,b_0}$ such that
\begin{align}\label{3.5}
\|(u_0^\hbar,b_0^\hbar)\|_{\chi^{-1}}=\int_{|\xi|>\rho_{u_0,b_0}}|\xi|^{-1}\left(|\hat{u}_0|+|\hat{b}_0|\right)\md\xi\leq\frac{\min\{\mu,\nu\}}{2^\frac52C_0}.
\end{align}
By (\ref{3.3}) we derived that
\begin{align}\label{3.6}
\|(\mathrm{e}^{\mu t\triangle}u_0,\mathrm{e}^{\nu t\triangle}b_0)\|_{L^2(\chi^{0})}\leq\frac{\min\{\mu,\nu\}^\frac12}{8C_0}+\|(\mathrm{e}^{\mu t\triangle}u_0^\ell,\mathrm{e}^{\nu t\triangle}b_0^\ell)\|_{L^2(\chi^{0})},
\end{align}
and
\begin{align}\label{3.7}
\nonumber&\|(\mathrm{e}^{\mu t\triangle}u_0^\ell,\mathrm{e}^{\nu t\triangle}b_0^\ell)\|_{L^2(\chi^{0})}\\
\nonumber=&\big\|\int_{|\xi|\leq\rho_{u_0,b_0}}|\xi||\xi|^{-1}\left(\mathrm{e}^{-\mu t|\xi|^2}|\hat{u}_0(\xi)|+\mathrm{e}^{-\nu t|\xi|^2}\hat{b}_0(\xi)|\right)\md \xi\big\|_{L^2}\\
\leq&\rho_{u_0,b_0}T^{\frac12}\|(u_0,b_0)\|_{\chi^{-1}}.
\end{align}
Hence, if we choose
\begin{align}\label{3.8}
T\leq\left(\frac{\min\{\mu,\nu\}^\frac12}{8\rho_{u_0,b_0}C_0\|(u_0,b_0)\|_{\chi^{-1}}}\right)^{2},
\end{align}
then we have a unique solution $(u,b)$ in the ball $B(0,\frac{\min\{\mu,\nu\}^\frac12}{2C_0})$ of the space $L^2([0,T];\chi^0(\mr^2))$.

Next, we prove the persistence that if $(u,b)$ is a solution to (\ref{1.1}) in $L^2([0,T];\chi^0(\mr^2))$ with initial data $(u_0,b_0)\in \chi^{-1}(\mr^2)$, then $(u,b)$ also belongs to $$\mathcal{C}([0,T];\chi^{-1}(\mr^2))\cap L^1([0,T];\chi^1(\mr^2)).$$ In fact, let $s=-1$ and $n=2$ in Lemma \ref{2.6} and by Lemma \ref{2.3} the result is followed.

Finally, we prove the blow-up criterion (\ref{3.1}). Assume that we have a solution to the equations (\ref{1.1}) on a time interval $[0,T)$ such that
$$\int_0^{T}\|(u,b)\|_{\chi^{0}}^2\md t<\infty.$$
We claim that the lifespan $T^*$ of $(u,b)$ is larger than $T$. Indeed, due to the estimate (\ref{2.9}) in Lemma \ref{2.6} and  by Lemma \ref{lem2.3}, we have
\begin{align}\label{3.9}
\int_{\mr^n} \sup_{0\leq t\leq T}|\xi|^{-1}(|\hat{u}|+|\hat{b}|)(t,\xi)\md \xi\leq \|(u_0,b_0)\|_{\chi^{-1}}+C\|(u,b)\|_{L^2(\chi^{0})}^2<\infty.
\end{align}
Thus, a positive number $\rho$ exists such that
\begin{align}\label{3.10}
\forall t\in [0,T), \quad \int_{|\xi|>\rho}|\xi|^{-1}(|\hat{u}|+|\hat{b}|)(t,\xi)\md \xi\leq\frac{\min\{\mu,\nu\}}{2^\frac52C_0}.
\end{align}
The condition (\ref{3.8}) now implies that for any $t\in[0,T)$, the lifespan for a solution to (\ref{1.1}) with initial data $\left(u(t),b(t)\right)$ is bounded from below by a positive real number $C$ which is independent of $t$. Thus the lifespan $T^*>T$, and the whole proof of Lemma \ref{lem3.1} is finished.
\end{proof}
\subsection {Proof of Theorem \ref{thm1.1}} Taking the $L^2$ inner products of the equations $(\ref{1.1})_{1,2}$ with $u$ and $b$, respectively, adding the results and integrating by parts, we obtain, for any $t\in (0,+\infty)$, the energy equality
\begin{align}\label{3.11}
\|(u,b)\|_{L^{2}}^{2}+2\mu\int_0^t\|\nabla u\|_{L^{2}}^{2}\md\tau+2\nu\int_0^t\|\nabla b\|_{L^{2}}^{2}\md\tau=\|(u_0,b_0)\|_{L^{2}}^{2},
\end{align}
for details refer to \cite[Theorem 5.14]{B-C-D}. And by Lemma \ref{lem2.2}, it yields that, for any $t\in (0, +\infty)$
\begin{align}\label{3.12}
\nonumber\int_0^t\|(u,b)\|_{\chi^{-\frac12}}^4 \md \tau
\nonumber\leq&\|(u,b)\|_{L^2}^2\int_0^t\|(u,b)\|_{\dot{H}^1}^2 \md \tau\\
\leq&\frac{1}{2\min\{\mu\nu\}}\|(u_0,b_0)\|_{L^2}^4.
\end{align}
According to Lemmas \ref{lem2.6}, \ref{lem2.3}  and \ref{lem2.1}, by the Young's inequality, we deduce that
\begin{align}\label{3.13}
\nonumber&\|(u,b)\|_{\widetilde{L}^\infty(\chi^{-1})}+\mu\int_0^t\|u\|_{\chi^{1}}\md \tau+\nu\int_0^t\|b\|_{\chi^{1}}\md\tau\\
\nonumber\leq&\|(u_0,b_0)\|_{\chi^{-1}}+C\int_0^t\|(u,b)\|_{\chi^{0}}^2\md\tau\\
\nonumber\leq&\|(u_0,b_0)\|_{\chi^{-1}}+C\int_0^t\|(u,b)\|_{\chi^{-\frac12}}^{\frac43}\|(u,b)\|_{\chi^1}^{\frac23}\md \tau\\
\leq&\|(u_0,b_0)\|_{\chi^{-1}}+C\int_0^t\|(u,b)\|_{\chi^{-\frac12}}^4\md\tau+\frac{1}{2}\min\{\mu,\nu\}\int_0^t\|(u,b)\|_{\chi^1}\md\tau.
\end{align}
Inserting the estimate (\ref{3.12}) into (\ref{3.13}) leads to the result
\begin{align}\label{3.14}
\|(u,b)\|_{\widetilde{L}^\infty(\chi^{-1})}+\frac{1}{2}\min\{\mu,\nu\}\int_0^t\|(u,b)\|_{\chi^1}\md\tau
\leq\|(u_0,b_0)\|_{\chi^{-1}}+\frac{C}{2\min\{\mu\nu\}}\|(u_0,b_0)\|_{L^2}^4.
\end{align}
By Lemma \ref{2.1}, we arrive at
\begin{align}\label{3.15}
\int_0^{t}\|(u,b)\|_{\chi^{0}}^2\md \tau\leq\int_0^{t}\|(u,b)\|_{\chi^{-1}}\|(u,b)\|_{\chi^{1}}\md \tau\leq C\left(\mu,\nu,\|(u_0,b_0)\|_{\chi^{-1}},\|(u_0,b_0)\|_{L^2}\right),
\end{align}
for any $0\leq t\leq T$. Thus, the blow-up criterion (\ref{3.1}) implies that the strong solution $(u,b)$ is a global one in $L^2([0,T];\chi^{0}(\mr^2))$ for any $T<\infty$. And by Lemma \ref{2.6} the solution $(u,b)\in\mathcal{C}(\mr^+;\chi^{-1}(\mr^2))\cap L^1(\mr^+;\chi^1(\mr^2))$, which completes the proof of Theorem \ref{thm1.1}.
\subsection {Proof of Theorem \ref{thm1.2}} Subtracting the two equations satisfied by $(u,b)$ and $(v,h)$, respectively, we get
\be\label{4.1}
\begin{cases}
{ \begin{array}{ll}
\partial_{t}w-\mu\Delta w+(w\cdot\nabla) u+(v\cdot\nabla) w+\nabla (p_1-p_2)=(g\cdot\nabla) b+(h\cdot\nabla)g,\\
\partial_{t}g-\nu\Delta g+(w\cdot\nabla) b+(v\cdot\nabla) g=(g\cdot\nabla) u+(h\cdot\nabla) w, \\
 \end{array} }
\end{cases}
\ee
where $(w, g)\triangleq (u-v, b-h)$. Taking the $L^2$ inner products of the equations $(\ref{4.1})_{1,2}$ with $w$ and $g$, respectively, adding the results and by H\"{o}lder's and Young's inequalities and the fact $\|f\|_{L^\infty}\leq\|\hat{f}\|_{L^1}=\|f\|_{\chi^0}$, and
$$\int_{\mr^{2}}(v\cdot\nabla)w\cdot w \md x=0, \ \int_{\mr^{2}}(v\cdot\nabla)g\cdot g\md x=0$$
and $$\int_{\mr^{2}}(h\cdot\nabla)g\cdot w\md x+\int_{\mr^{2}}(h\cdot\nabla)w\cdot g\md x=0,$$
it follows that
\begin{align}\label{4.2}
\nonumber&\frac12\frac{\md}{\md t}\|(w,g)\|_{L^2}^2+\mu\|\nabla w\|_{L^2}^2+\nu\|\nabla g\|_{L^2}^2\\
\nonumber=&-\int_{\mr^{2}}(w\cdot\nabla) u\cdot w\md x+\int_{\mr^{2}}(g\cdot\nabla) b\cdot w\md x-\int_{\mr^{2}}(w\cdot\nabla) b\cdot g\md x+\int_{\mr^{2}}(g\cdot\nabla) u\cdot g\md x\\
\nonumber\leq&C\left(\|\nabla w\|_{L^2}\|w\|_{L^2}\|u\|_{L^\infty}+\|\nabla w\|_{L^2}\|g\|_{L^2}\|b\|_{L^\infty}+\|\nabla g\|_{L^2}\|w\|_{L^2}\|b\|_{L^\infty}+\|\nabla g\|_{L^2}\|g\|_{L^2}\|u\|_{L^\infty}\right)\\
\leq&\frac12\min\{\mu,\nu\}\|(\nabla w,\nabla g)\|_{L^2}^2+C\|(w,g)\|_{L^2}^2\|(u,b)\|_{\chi^0}^2.
\end{align}
Thanks to the Gr\"{o}nwall's inequality, we conclude the proof of (\ref{1.4}). And we thus complete the proof of Theorem \ref{thm1.2}.

\textbf{Acknowledgements} The research of B Yuan
was partially supported by the National Natural Science Foundation
of China (No. 11471103).



\begin{thebibliography}{}

\bibitem{B1} H. Bae, Existence and analyticity of Lei-Lin solution to the Navier-Stokes equations,
       \textit{Proc. Amer. Math. Soc.}, \textbf{143} (2015), 2887-2892.

\bibitem{B2} J. Benameur, Long time decay to the Lei-Lin solution of 3D Navier-Stokes equations,
       \textit{J. Math. Anal. Appl.}, \textbf{422} (2015), 424-434.

\bibitem{B-B} J. Benameur, M. Bennaceur, Large time behaviour of solutions to the 3D-NSE in $\chi^\sigma$ spaces,
        \textit{J. Math. Anal. Appl.}, \textbf{482} (2020), 123566.

\bibitem{B-C-D} H. Bahouri, J.-Y. Chemin, R. Danchin, \textit{Fourier Analysis and Nonliear Partial
Differential Equations},
        Springer-Verlag Berlin Heidelberg, 2011.

\bibitem{F-K} H. Fujita, T. Kato, On the Navier-Stokes initial value problem I,
       \textit{Arch. Ration. Mech. Anal.}, \textbf{16} (1964), 269-315.

\bibitem{K} T. Kato, Strong $L^p$-solutions of the Navier-Stokes equation in $\mr^m$, with applications to weak solutions,
       \textit{Math. Z.}, \textbf{187} (1984), 471-480.

\bibitem{H} E. Hopf, \"{U}ber die Anfangswertaufgabe f\"{u}r die hydrodynamischen Grundgleichungen,
       \textit{Math. Nachrichten}, \textbf{4} (1950), 213-231.

\bibitem{L} J. Leray, Sur le mouvement d'un liquide visqueux emplissant l'espace,
       \textit{Acta Math.}, \textbf{63} (1934), no. 1, 193-248.

\bibitem{L-L} Z. Lei, F. H. Lin, Global mild solutions of Navier-Stokes equations,
       \textit{Comm. Pure Appl. Math.}, \textbf{64} (2011), 1297-1304.

\bibitem{W} Y. X. Wang, Asymptotic decay of solutions to 3D MHD equations,
          \textit{Nonlinear Anal.}, \textbf{132} (2016), 115-125.

\bibitem{W-W} Y. Z. Wang, K. Y. Wang, Global well-posedness of the three dimensional magnetohydrodynamics equations, \textit{Nonlinear Anal. Real World Appl.}, \textbf{17} (2014), 245-251.

\bibitem{X-Y-Z} Y. M. Xiao, B. Q, Yuan, Q. Y. Zhang, Temporal decay estimate of solutions to 3D generalized
magnetohydrodynamic system,
           \textit{Appl. Math. Lett.}, \textbf{98} (2019), 108-113.

\bibitem{Y} Z. Ye, Global well-posedness and decay results to 3D generalized viscous magnetohydrodynamic
equations,
          \textit{Ann. Mat. Pura Appl.}, \textbf{195} (2016), 1111-1121.

\bibitem{Y-Z} Z. Ye, X. P. Zhao, Global well-posedness of the generalized magnetohydrodynamic equations,
          \textit{Z. Angew. Math. Phys.}, \textbf{69} (2018) 126, 1-26.

\bibitem{Z-Y} Z. Zhang, Z. Y. Yin, Global well-posedness for the generalized Navier-Stokes system,
          \textit{arXiv:1306.3735v1}, (2013).
\end{thebibliography}
\end{document}